\numberwithin{figure}{section} \numberwithin{equation}{section}
\newcommand{\G}{\CC{\Gamma}}
\newcommand{\DDD}{\mathcal{D}}
\newcommand{\ee}{{\mathrm e}}
\newcommand{\PP}{\mathrm{P}}
\newcommand{\LL}{\mathrm{L}}
\newcommand{\hyper}[5]
       {{}_{#1} F_{#2} \!\left[
           \begin{array}{l}
         #3;\\#4;
           \end{array}#5\right]
       }
\begin{document}
\title{A recurrence relation for generalised connection coefficients}
\author{Jing Gao and Arieh Iserles}

\maketitle
\tableofcontents

\thispagestyle{empty}

\begin{abstract}
  We formulate and prove a general recurrence relation that applies to integrals involving orthogonal polynomials and similar functions. A special case are connection coefficients between two sets of orthonormal polynomials, another example is integrals of products of Legendre functions.
\end{abstract}

\section{A universal recurrence for connection coefficients}

Let two Borel measures, $\D\mu$ and $\D\nu$, be given. We denote by $\mathcal{P}=\{p_n\}_{n\in\mathbb{Z}_+}$ and $\mathcal{Q}=\{q_n\}_{n\in\mathbb{Z}_+}$ respectively the orthonormal polynomials with respect to $\D\mu$ and $\D\nu$. It is elementary that both $\mathcal{P}$ and $\mathcal{Q}$ obey three-term recurrence relations of the form
\begin{Eqnarray}
  \label{eq:1.1}
  b_n p_{n+1}(x)&=&(x-a_n)p_n(x)-b_{n-1}p_{n-1}(x),\qquad n\in\mathbb{Z}_+,\\
  \label{eq:1.2}
  d_n q_{n+1}(x)&=&(x-c_n)q_n(x)-d_{n-1}q_{n-1}(x),
\end{Eqnarray}
\cite{chihara78iop}, where $b_{-1}=d_{-1}=0$ and $b_n, d_n>0$ for $n\in\mathbb{Z}_+$. The purpose of this short paper is to investigate the elements of the infinite matrix $\DDD$, where
\begin{equation}
  \label{eq:1.3}
  \DDD_{m,n}=\int_{-\infty}^\infty p_m(x)q_n(x)\D\eta(x),\qquad m,n\in\mathbb{Z}_+,
\end{equation}
where $\D\eta$ is yet another, possibly signed, Borel measure -- note that $\D\mu$, $\D\nu$ and $\D\eta$ need not be distinct. Indeed, the case $\D\nu=\D\eta$ is classical and corresponds to {\em connection coefficients\/} \cite[p.~253]{Ismail}, expressing one set of orthonormal polynomials in terms of another.

Expressions of the form \R{eq:1.3} feature in \cite{Arieh23} and in forthcoming publications of the current authors. It is our contention in this paper that they all obey a very helpful recurrence relation. While both the recurrence and its proof are elementary, they are (to the best of our knowledge) new and they have significant ramifications, some of  which are described in the sequel.

\begin{lemma}
  It is true that
  \begin{equation}
  \label{eq:1.4}
    b_{m-1}\DDD_{m-1,n}+a_m\DDD_{m,n}+b_m\DDD_{m+1,n}=d_{n-1}\DDD_{m,n-1}+c_n\DDD_{m,n}+d_n\DDD_{m,n+1}
  \end{equation}
  for all $m,n\in\mathbb{Z}_+$.
\end{lemma}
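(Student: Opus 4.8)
The plan is to compute the single integral $\int_{-\infty}^\infty x\,p_m(x)q_n(x)\,\D\eta(x)$ in two different ways, using the three-term recurrence \R{eq:1.1} to expand $xp_m(x)$ and the three-term recurrence \R{eq:1.2} to expand $xq_n(x)$, and then to equate the two resulting expressions. Concretely, I would first rewrite \R{eq:1.1} in the ``multiplication-by-$x$'' form
\begin{equation*}
  x\,p_m(x)=b_m p_{m+1}(x)+a_m p_m(x)+b_{m-1}p_{m-1}(x),\qquad m\in\mathbb{Z}_+,
\end{equation*}
and similarly rewrite \R{eq:1.2} as $x\,q_n(x)=d_n q_{n+1}(x)+c_n q_n(x)+d_{n-1}q_{n-1}(x)$. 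Each identity holds for all $x$, with the convention $b_{-1}=d_{-1}=0$ absorbing the boundary term at $m=0$ or $n=0$, so no separate base case is needed.

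Next I would substitute the first identity into $\int x p_m q_n\,\D\eta$, using linearity of the integral and the definition \R{eq:1.3}, to obtain
\begin{equation*}
  \int_{-\infty}^\infty x\,p_m(x)q_n(x)\,\D\eta(x)=b_m\DDD_{m+1,n}+a_m\DDD_{m,n}+b_{m-1}\DDD_{m-1,n},
\end{equation*}
which is exactly the left-hand side of \R{eq:1.4} (after trivial reordering). Then I would instead regard the integrand as $p_m(x)\cdot\bigl(x q_n(x)\bigr)$, substitute the second identity, and get
\begin{equation*}
  \int_{-\infty}^\infty p_m(x)\,x\,q_n(x)\,\D\eta(x)=d_n\DDD_{m,n+1}+c_n\DDD_{m,n}+d_{n-1}\DDD_{m,n-1},
\end{equation*}
the right-hand side of \R{eq:1.4}. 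Since the two quantities are the same integral, \R{eq:1.4} follows immediately.

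The computation itself is routine; the only point that requires care — and what I would flag as the ``obstacle'', such as it is — is making sure every integral that appears is well defined. The integrand $x\,p_m(x)q_n(x)$ is a polynomial of degree $m+n+1$, so it suffices that $\D\eta$ (and, implicitly, $\D\mu$, $\D\nu$) possess finite moments of all orders, interpreted as finite moments of the total variation $|\D\eta|$ in the signed case; this is the standing assumption under which the entries $\DDD_{m,n}$ in \R{eq:1.3} are themselves finite, so no extra hypothesis is needed. Under that assumption every manipulation above — splitting the integral of a finite linear combination, relabelling indices — is elementary, and the proof is complete.
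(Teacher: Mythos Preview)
Your proof is correct and is essentially the same as the paper's: both compute $\int_{-\infty}^\infty x\,p_m(x)q_n(x)\,\D\eta(x)$ via the two three-term recurrences and equate the results, the only cosmetic difference being that the paper starts by substituting the expression for $p_{m+1}$ into $\DDD_{m+1,n}$ and then expands $xq_n(x)$, whereas you present the two expansions symmetrically from the outset. Your remark on finiteness of moments is a reasonable clarification the paper leaves implicit.
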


\begin{proof}
  It follows from \R{eq:1.1} and $b_m>0$ that
  \begin{displaymath}
    p_{m+1}(x)=\frac{1}{b_m} [(x-a_m)p_m(x)-b_{m-1}p_{m-1}(x)]
  \end{displaymath}
  and substitution in \R{eq:1.3} results in
  \begin{Eqnarray*}
    \DDD_{m+1,n}&=&\frac{1}{b_m}\int_{-\infty}^\infty [(x-a_m)p_m (x)-b_{m-1}p_{m-1}(x)]q_n(x)\D\eta(x)\\
    &=&\frac{1}{b_m} \int_{-\infty}^\infty x p_m(x)q_n(x)\D\eta(x)-\frac{a_m}{b_m} \DDD_{m,n}-\frac{b_{m-1}}{b_m}\DDD_{m-1,n}.
  \end{Eqnarray*}
  However, it is a consequence of \R{eq:1.2} that
  \begin{displaymath}
    xq_n(x)=d_n q_{n+1}(x)+c_n q_n(x)+d_{n-1}q_{n-1}(x)
  \end{displaymath}
  and substitution within the integral yields \R{eq:1.4} after elementary algebra.
\end{proof}

Let us ponder briefly the meaning of \R{eq:1.4}. It is essentially a `cross rule',
\begin{displaymath}
  \begin{picture}(60,80)
    \put (-40,-4) {$m+1$}
    \put (-40,26) {$m$}
    \put (-40,56) {$m-1$}
    \multiput (-8,-0.5)(4,0){20} {.}
    \multiput (-8,29.5)(4,0){20} {.}
    \multiput (-8,59.5)(4,0){20} {.}
    \put (-12,74) {$n-1$}
    \put (27,74) {$n$}
     \put (48,74) {$n+1$}
    \multiput (-1,68)(0,-4){20} {.}
    \multiput (29,68)(0,-4){20} {.}
    \multiput (59,68)(0,-4){20} {.}
    \put (0,30) {\circle*{4}}
    \put (30,0) {\circle*{4}}
    \put (30,30) {\circle*{4}}
    \put (30,60) {\circle*{4}}
    \put (60,30) {\circle*{4}}
  \end{picture}
\end{displaymath}
Once the boundary conditions $\DDD_{m,0}$ and $\DDD_{0,n}$, are provided and because $\DDD_{m,-1}=\DDD_{-1,n}=0$,\footnote{Since $p_{-1},q_{-1}\equiv0$.} they allow us in tandem with \R{eq:1.4} to fill in the remainder of $\DDD$ in a recursive manner.

Our narrative is focussed on orthonormal polynomials but it can be easily generalised to other normalisations, e.g.\ to monic orthogonal polynomials. In Section~3 we apply similar construction to Legendre functions, which in general are not polynomial.

\section{Few examples}

\subsection{A pair of Legendre weights}

The trivial case $\D\mu=\D\nu=\D\eta$ results in a unit matrix, as it of course should. As a somewhat less elementary example let us consider $\D\mu(x)=\D\nu(x)=\chi_{(-1,1)}(x)\D x$ and $\D\eta(x)=\chi_{(-1,1)}(x)x^2\D x$. Thus, $p_n=q_n=\sqrt{n+\frac12}\CC{P}_n$, normalised standard Legendre polynomials, and one can prove easily from the standard three-term recurrence for Legendre polynomials that
\begin{displaymath}
  a_n,c_n\equiv0,\qquad b_n=d_n=\frac{n+1}{\sqrt{(2n+1)(2n+3)}}.
\end{displaymath}
Therefore it follows from \R{eq:1.4} that
\begin{Eqnarray*}
  &&\frac{m}{\sqrt{(2m-1)(2m+1)}}\DDD_{m-1,n}+\frac{m+1}{\sqrt{(2m+1)(2m+3)}}\DDD_{m+1,n}\\
  &=&\frac{n}{\sqrt{(2n-1)(2n+1)}}\DDD_{m,n-1}+\frac{n+1}{\sqrt{(2n+1)(2n+3)}}\DDD_{m,n+1}.
\end{Eqnarray*}
Finally, $\DDD$ is symmetric, $\DDD_{0,n}=0$ for $n\geq3$ (because $x^2 p_0$ is then orthogonal to $p_n$) and
\begin{displaymath}
  \DDD_{0,0}=\frac13, \qquad \DDD_{0,1}=0,\qquad \DDD_{0,2}=\frac{2\sqrt{5}}{15}.
\end{displaymath}
In general, $\DDD_{m,n}=0$ for $m+n$ odd and for $|m-n|\geq3$, while
\begin{displaymath}
  \DDD_{n,n}=\frac{2n^2+2n-1}{(2n-1)(2n+3)},\qquad \DDD_{n+2,n}=\DDD_{n,n+2}=\frac{(n+1)(n+2)}{(2n+3)\sqrt{(2n+1)(2n+5)}}.
\end{displaymath}
This can be verified at once from the above recurrence relation.

\subsection{Connection coefficients}

$\mathcal{Q}$ is a basis of the space of polynomials , hence each $p_m$ can be expanded as a linear combination of $q_0,q_1,\ldots,q_m$: the coefficients are $\DDD_{m,n}$ with $\D\eta=\D\nu$. These {\em connection coefficients\/} \cite[p.~255--261]{Ismail} are of importance in the theory of orthogonal polynomials and they are known explicitly in some situations, e.g.
\begin{displaymath}
  \PP_m^{(\gamma,\delta)}(x)=\sum_{n=0}^m \DDD_{m,n} \PP_n^{(\alpha,\beta)}(x),\qquad \alpha,\beta,\gamma,\delta>-1,
\end{displaymath}
where
\begin{Eqnarray*}
  \DDD_{m,n}&=&\frac{(\gamma+n+1)_{m-n}(m+\gamma+\delta+1)_n}{(m-n)!\G(\alpha+\beta+2n+1)} \G(\alpha+\beta+n+1)\\
  &&\mbox{}\times \hyper{3}{2}{-m+n,m+n+\gamma+\delta+1,\alpha+n+1}{\gamma+n+1,\alpha+\beta+2n+2}{1}
\end{Eqnarray*}
\cite[Thm.~9.1.1]{Ismail}.\footnote{Jacobi polynomials, of course, are not orthonormal but the formula can be easily transformed to our setting.}

The matrix $\DDD$ of connection coefficients is lower triangular, invertible and $\DDD^{-1}$ yields the connection coefficients expressing the $q_m$s in terms of $p_0,p_1,\ldots,p_m$. As an example (which we did not find in literature) let us `connect' two kinds of Laguerre polynomials: express $\LL_m^{(\alpha)}$ in terms of $\LL_n^{(\beta)}$ for $n\leq m$, where $\alpha,\beta>0$. We have
\begin{displaymath}
  \LL_{n+1}^{(\beta)}(x)=\left(-\frac{x}{n+1}+\frac{2n+\beta+1}{n+1}\right)\!\LL_n^{(\beta)}(x)
  -\frac{n+\beta}{n+1}\LL_{n-1}^{(\beta)}(x),
\end{displaymath}
while the orthonormal Laguerre polynomials (with a positive multiple of $x^n$) are
\begin{displaymath}
  \tilde{\LL}_n^{(\beta)}(x)=(-1)^n \sqrt{\frac{n!}{\G(n+\beta+1)}} \LL_n^{(\beta)}(x)
\end{displaymath}
and they obey  the three-term recurrence relation with
\begin{displaymath}
  b_n=\sqrt{(n+1)(n+1+\beta)},\quad a_n=2n+1+\beta,\qquad n\in\mathbb{Z}_+.
\end{displaymath}
The recurrence \R{eq:1.4} yields
\begin{Eqnarray}
  \nonumber
  &&\sqrt{m(m+\alpha)}\DDD_{m-1,n}+(2m+1+\alpha)\DDD_{m,n}+\sqrt{(m+1)(m+1+\alpha)}\DDD_{m+1,n}\\
  \label{eq:2.1}
  &=&\sqrt{n(n+\beta)}\DDD_{m,n-1}+(2n+1+\beta)\DDD_{m,n}+\sqrt{(n+1)(n+1+\beta)}\DDD_{m,n+1},\hspace*{25pt}
\end{Eqnarray}
where
\begin{displaymath}
\DDD_{m,n}=\int_0^\infty \tilde{L}_m^{(\alpha)}(x) \tilde{L}_n^{(\beta)}(x) x^\beta \ee^{-x}\D x.
\end{displaymath}

This need be accompanied by boundary conditions. It is clear that $\DDD_{0,n}\equiv0$ for $n\in\mathbb{N}$ (recall that $\DDD$ is lower triangular!) while, since $\tilde{\LL}_0^{(\beta)}\equiv 1/\sqrt{\G(1+\beta)}$, we have
\begin{Eqnarray*}
  \DDD_{m,0}&=&\frac{1}{\sqrt{\G(1+\beta)}} \int_0^\infty \tilde{L}_m^{(\alpha)}(x) x^\beta \ee^{-x}\D x\\
  &=&(-1)^m \sqrt{\frac{m!}{\G(m+\alpha+1)\G(1+\beta)}} \int_0^\infty \LL_m^{(\alpha)}(x) x^\beta \ee^{-x}\D x.
\end{Eqnarray*}

To derive $\frak{d}_m=\int_0^\infty \LL_m^{(\alpha)}(x) x^\beta \ee^{-x}\D x$ we use the standard recurrence relation for Laguerre polynomials \cite[18.12.13]{dlmf},
\begin{displaymath}
  \sum_{m=0}^\infty \LL_m^{(\alpha)}(x)z^m=\frac{1}{(1-z)^{1+\alpha}} \exp\!\left(-\frac{xz}{1-z}\right)\!,\qquad 0<z<1.
\end{displaymath}
Therefore
\begin{displaymath}
  \sum_{m=0}^\infty \frak{d}_m z^m=\frac{1}{(1-z)^{1+\alpha}} \int_0^\infty \exp\!\left(-\frac{x}{1-z}\right)\! x^\beta\D x=\G(1+\beta)(1-z)^{\beta-\alpha} .
\end{displaymath}
Expanding the binomial function we obtain
\begin{displaymath}
  \DDD_{m,0}=(-1)^m \sqrt{\frac{\G(1+\beta)}{m!\G(m+1+\alpha)}} (\alpha-\beta)_m,\qquad m\in\mathbb{Z}_+.
\end{displaymath}
Substitution in \R{eq:2.1} conforms that the explicit formula for Laguerre--Laguerre connection coefficients is
\begin{displaymath}
  \DDD_{m,n}=(-1)^{m-n} \frac{(\alpha-\beta)_{m-n}}{(m-n)!} \sqrt{\frac{m!\G(n+1+\beta)}{n!\G(m+1+\alpha)}} ,\qquad 0\leq n\leq m.
\end{displaymath}

\subsection{Matrices $\mathcal{D}$ that occur in the analysis of orthonormal systems}

The recurrence \R{eq:2.1} is independent of the choice of $\D\eta$. A particular choice of a {\em signed\/} measure $\D\eta$, together with the choice $\alpha=\beta$, features in \cite{Arieh23} in the analysis of certain systems orthonormal in $\LL_2(a,b)$. It has been ito a large extent the original motivation to consider expressions of the form \R{eq:1.1}. Specifically, requiring $\alpha>0$, the functions
\begin{displaymath}
  \varphi_n(x)=\sqrt{\frac{n!}{\G(n+1+\alpha)}} x^{\alpha/2}\ee^{-x/2} \LL_n^{(\alpha)}(x),\qquad n\in\mathbb{Z}_+,
\end{displaymath}
form an orthonormal set in $\CC{L}_2[0,\infty)$ and the {\em differentiation matrix\/}
\begin{displaymath}
  \mathcal{E}_{m,n}=\int_0^\infty \frac{\D \varphi_m(x)}{\D x} \varphi_n(x)\D x,\qquad m,n\in\mathbb{Z}_+,
\end{displaymath}
is skew symmetric. The evaluation of the entries of $\mathcal{E}$ led in \cite{Arieh23} to the evaluation of expressions of the form
\begin{displaymath}
  \int_0^\infty \tilde{\LL}_m^{(\alpha)}(x)\tilde{\LL}_n^{(\alpha)}(x) \frac{\D x^\alpha \ee^{-x}}{\D x}\D x,\qquad m,n\in\mathbb{Z}_+,
\end{displaymath}
hence \R{eq:1.1} with Laguerre weights $\D\mu(x)=\D\nu(x)=\chi_{(0,\infty)}(x) x^\alpha \ee^{-x}\D x$ and the {\em signed\/} measure
\begin{displaymath}
  \D\eta(x)=\frac{\D x^\alpha \ee^{-x}}{\D x}\D x=(\alpha-x)x^{\alpha-1}\D x,\qquad x>0.
\end{displaymath}
(As a matter of fact, because of orthogonality, we might replace this by $\D\eta(x)=\alpha x^{\alpha-1}\D x$.) The recurrence \R{eq:2.1} remains valid (with $\beta=\alpha$). The $\mathcal{E}_{m,n}$s have been derived explicitly in \cite{Arieh23} by long algebra -- instead we can verify in a straightforward manner that
\begin{displaymath}
  \mathcal{E}_{m,n}=-\frac12 \sqrt{\frac{m!\G(n+1+\alpha)}{\G(m+1+\alpha)n!}},\qquad m \geq n+1,
\end{displaymath}
with symmetric completion for $m < n$, is consistent with \R{eq:2.1} and obeys the boundary conditions for $n=0$ and $m=0$.  In addition, $\mathcal{E}_{m,m}= 0$.

Similar expression, again originating in \cite{Arieh23}, is
\begin{displaymath}
  \mathcal{F}_{m,n}=\int_{-1}^1 (1-x^2)^{\alpha-1} \PP_m^{(\alpha,\alpha)}(x)\PP_n^{(\alpha,\alpha)}(x)\D x,\qquad m,n\in\mathbb{Z}_+,
\end{displaymath}
where $\alpha>0$ and $\PP_n^{(\alpha,\alpha)}$ is an ultraspherical polynomial. Thus, $\D\mu(x)=\D\nu(x)=\chi_{(-1,1)}(x)(1-x^2)^\alpha\D x$ and $\D\eta(x)=\chi_{(-1,1)}(x)(1-x^2)^{\alpha-1}\D x$. It is enough to consider the case $m+n$ even, otherwise $\mathcal{F}_{m,n}$ vanishes. Using exactly the same approach as for Laguerre weights above (or a much longer and more convoluted algebra in \cite{Arieh23}) we can prove that
\begin{displaymath}
  \mathcal{F}_{m,n}=\frac{4^\alpha}{\alpha} \frac{\G(m+1+\alpha)\G(n+1+\alpha)}{n!\G(m+1+2\alpha)},\qquad m\geq n,\quad m+n\mbox{\ even},
\end{displaymath}
with symmetric completion for $m\geq n$.

\section{Associated Legendre functions}

The solutions of the differential equation
\begin{displaymath}
  (1-x^2)y''-2xy'+\left[\nu(\nu+1)-\frac{\mu^2}{1-x^2}\right]\!y=0,\qquad x\in(-1,1),
\end{displaymath}
where $\mu,\nu\in\mathbb{C}$, are {\em  Legendre functions\/} $\PP_\nu^\mu$ and {\em associated Legendre functions\/} $\mathrm{Q}_\nu^\mu$. These functions feature in a wide range of applications, not least as a main component of {\em spherical harmonics.\/} They admit a number of confusingly different normalisations \cite{abramowitz66hma,arfken66mmp,courant53mmp,dlmf} yet, in this paper, being interested in $\nu\in\mathbb{Z}_+$ and $\mu\in\{0,1,\ldots,\nu\}$ and restricting our attention to the Legendre functions $\PP_n^m$, we use the expression
\begin{equation}
  \label{eq:3.1}
  \PP_n^m(x)=\frac{(-n)_m(1+n)_m}{m!} \left(\frac{1-x}{1+x}\right)^{\!m/2}\hyper{2}{1}{-n,n+1}{m+1}{\frac{1-x}{2}}\!,
\end{equation}
where $(z)_k=z(z+1)\cdots (z+k-1)$ is the familiar {\em Pochhammer symbol\/} and ${}_2F_1$ is a {\em hypergeometric function.\/}

Note that the $\PP_n^m$s are, in general, non-polynomial. Thus, while $\PP_n^0=\PP_n$, the standard Legendre polynomial, $\PP_n^1(x)=-\sqrt{1-x^2}\PP_n'(x)$ -- cf.\ \cite[14.6.1]{dlmf} for explicit expressions for $\PP_n^m$.

In our forthcoming work on multivariate expansions we have encountered the problem to calculate expressions of the form
\begin{equation}
  \label{eq:3.2}
  g^m_{\ell,n}=g_{n,\ell}^m=\int_{-1}^1 \frac{\PP_\ell^m(x)\PP_n^m(x)}{\sqrt{1-x^2}}\D x,\qquad m\leq\min\{\ell,n\}.
\end{equation}
Since it follows readily from \R{eq:3.1} that $\PP_\ell^m(-x)=(-1)^{m+\ell}\PP_\ell^m(x)$, we deduce that $g^m_{\ell,n}=0$ when $\ell+n$ is odd.

Our starting point is the three-term recurrence relation
\begin{equation}
  \label{eq:3.3}
  (n-m+1)\PP_{n+1}^m(x)-(2n+1)x\PP_n^m(x)+(m+n)\PP_{n-1}^m(x)=0,
\end{equation}
which can be easily derived from \cite[14.10.3]{dlmf}.

We proceed similarly to Section~1. Combining \R{eq:3.2} and \R{eq:3.3}, we have
\begin{Eqnarray*}
  &&\int_{-1}^1\frac{\PP^m_\ell(x)}{\sqrt{1-x^2}}\! \left[\frac{n-m+1}{2n+1}\PP_{n+1}^m(x)+\frac{m+n}{2n+1}\PP_{n-1}^m(x)\right]\!\D x=\int_{-1}^1 \frac{x\PP_\ell^m(x)\PP_n^m(x)}{\sqrt{1-x^2}}\D x\\
  &=&\int_{-1}^1 \!\left[\frac{\ell-m+1}{2\ell+1} \PP_{\ell+1}^m(x)+\frac{m+\ell}{2\ell+1} \PP_{\ell-1}^m(x)\right]\!\PP_n^m(x)\D x
\end{Eqnarray*}
and  deduce the recurrence
\begin{equation}
  \label{eq:3.4}
  \frac{n-m+1}{2n+1} g_{n+1,\ell}^m+\frac{m+n}{2n+1} g_{n-1,\ell}^m=\frac{\ell-m+1}{2\ell+1}g_{n,\ell+1}^m+\frac{m+\ell}{2\ell+1} g_{n,\ell-1}^m,
\end{equation}
valid for all $m\leq \min\{\ell,n\}$. Once $n+\ell$ is even, all the terms vanish, but not so for an odd $n+\ell$.

For completeness, the recurrence \R{eq:3.4} requires boundary conditions. Given $m\in\mathbb{Z}_+$, we need $g_{n,m}^m=g_{m,n}^m$, $n\geq m$. In addition we need $g_{-1,m}^m+g_{m,1}^m=0$, $m\in\mathbb{N}$: this follows at once from \R{eq:3.1}.

It is an immediate consequence of \R{eq:3.1} that
\begin{displaymath}
  \PP_m^m(x)=(-1)^m \frac{(2m)!}{2^mm!} (1-x^2)^{m/2},\qquad m\in\mathbb{Z}_+,
\end{displaymath}
therefore
\begin{displaymath}
  g_{m,n}^m=(-1)^m \frac{(2m)!}{2^mm!}  \int_{-1}^1 (1-x^2)^{(m-1)/2} \PP_n^m(x)\D x,\qquad n\geq m.
\end{displaymath}
Of course, our interest is restricted to even values of $m+n$, otherwise $g_{m,n}^m$ vanishes.

Let
\begin{equation}
  \label{eq:3.5}
  G_n^m=\int_{-1}^1 (1-x^2)^{(m-1)/2}\PP_n^m(x)\D x,\qquad H_n^m=\int_{-1}^1 x(1-x^2)^{(m-1)/2}\PP_n^m(x)\D x
\end{equation}
for $0\leq m\leq n$, $m+n$ even. Thus, $g_{m,n}^m=(-1)^m (2m!)/(2^mm!) G_n^m$. It follows from \R{eq:3.3}, though, that
\begin{displaymath}
  (n-m+1)G^m_{n+1}+(m+n)G^m_{n-1}=(2n+1)H_n^m.
\end{displaymath}
Because of \cite[14.10.5]{dlmf} it is true that
\begin{displaymath}
  (1-x^2)\frac{\D\PP_n^m(x)}{\D x}=(m+n)\PP_{n-1}^m(x)-nx\PP_n^m(x)
\end{displaymath}
and it follows that
\begin{displaymath}
  H_n^m=\frac{1}{m+1}[(m+n) G_{n-1}^m -n H_n^m]
\end{displaymath}
-- we deduce that
\begin{displaymath}
  H_n^m=\frac{m+n}{m+n+1} G_{n-1}^m.
\end{displaymath}
It now follows from \R{eq:3.5} that
\begin{displaymath}
  G^m_{n+1}=\frac{n^2-m^2}{(n+1)^2-m^2}G^m_{n-1}
\end{displaymath}
and
\begin{displaymath}
  g_{m,n+1}^m=\frac{n^2-m^2}{(n+1)^2-m^2} g_{m,n-1}^m,\qquad m\leq n-1.
\end{displaymath}
(Of course, all this makes sense only for an odd $m+n$.). To start the recursion we need $g_{m,m}^m$ and to this end we note from \R{eq:3.1} that
\begin{displaymath}
  \PP_m^m(x)=(-1)^m \frac{(2m)!}{2^mm!} (1-x^2)^{m/2}
\end{displaymath}
and direct integration yields
\begin{displaymath}
  g_{m,m}^m=\pi (2m)! \!\left[\frac{1}{4^m}{{2m}\choose m}\right]^{\!2},\qquad m\in\mathbb{Z}_+.
\end{displaymath}
It is now a straightforward (yet tedious calculation) that
\begin{equation}
  \label{eq:3.6}
  g^m_{m,m+2k}=g^m_{m+2k,m}=\pi\frac{(2m)!}{4^{2m+2k}} {{2k}\choose k}{{2m}\choose m}{{2m+2k}\choose{m+k}},\qquad k\in\mathbb{Z}_+.
\end{equation}

Using the recurrence \R{eq:3.4} we derive explicitly, by long yet straightforward algebra,
\begin{equation}
  \label{eq:3.7}
  g_{m+1,m+2k+1}^m=g_{m+2k+1,m+1}=\frac{2\pi(m+1)}{4^{2m+2k+1}} (2m)!{{2k}\choose k}{{2m+1}\choose m}{{2m+2k+1}\choose{m+k}}
\end{equation}
for all $m,k\in\mathbb{Z}_+$.

Remaining values of $g_{n,\ell}^m$, $m\leq \min\{n,\ell\}$, can be filled in using the recurrence \R{eq:3.4}. Bearing in mind formul{\ae} \R{eq:3.6} and \R{eq:3.7}, we may expect that the remaining values of $g^m_{n,\ell}$ would be similarly neat and regular. Indeed,
\begin{displaymath}
  g_{m+2,m}^m=g_{m,m+2}^m=\frac{\pi(2m)!}{4^{2m+1}} {{2m}\choose m}{{2m+1}\choose m},\qquad m\in\mathbb{Z}_+,
\end{displaymath}
vindicating this expectation. However,
\begin{displaymath}
  g^m_{m+2,m+2}=\frac{\pi(2m+1)!}{2\cdot 4^{2m+2}} \frac{(8m^2+20m+11)}{2m+3} {{2m}\choose m}{{2m+3}\choose{m+1}}\qquad m\in\mathbb{Z}_+,
\end{displaymath}
and expressions do not become nicer for larger values of $k$. Yet, for practical uses in approximation algorithms, all we need is numerical values of the $g_{n,\ell}^m$s, and this can be easily produced by the recurrence \R{eq:3.4}.

\section{Conclusions}

Our point of departure, Lemma~1, is an exceedingly simple result with trivial proof. Yet, its ramifications are far and wide, since it allows for simple evaluation of connection coefficients and their generalisations and applies whenever a three-term recurrence relation is available for a set of functions: this is obviously true for orthogonal polynomials but also, for example, for Legendre functions $\PP_n^m$ which in general are neither polynomial nor orthogonal.

There is no free lunch: to use the recurrence \R{eq:1.4} or its generalisations we need first to determine boundary conditions. Yet, this is typically much easier than computing $\DDD_{m,n}$ for all $m,n\in\mathbb{Z}_+$ and it often suffices to do so numerically.

\bibliographystyle{agsm}
\bibliography{refs}

\end{document}